\newtheorem{theorem}{Theorem}[section]
\newtheorem{remark}[theorem]{Remark}
\newtheorem{lemma}[theorem]{Lemma}
\newtheorem{fact}[theorem]{Fact}
\newtheorem{prop}[theorem]{Proposition}
\newtheorem{cor}[theorem]{Corollary}
\newtheorem{ex}[theorem]{Example}
\newtheorem{defi}[theorem]{Definition}
\renewcommand{\c}{\mathfrak{C}}
\newcommand{\tp}{\mathrm{tp}}
\newcommand{\acl}{\mathrm{acl}}
\newcommand{\acle}{\mathrm{acl}^{\mathrm{eq}}}
\newcommand{\dcl}{\mathrm{dcl}}
\renewcommand{\c}{\mathfrak{C}}
\newcommand{\acli}{\mathrm{acl}^{\mathrm{eq}}}
\newcommand{\g}[1]{\mathfrak{#1}}
\def\Ind#1#2{#1\setbox0=\hbox{$#1x$}\kern\wd0\hbox to 0pt{\hss$#1\mid$\hss}
\lower.9\ht0\hbox to 0pt{\hss$#1\smile$\hss}\kern\wd0}
\def\ind{\mathop{\mathpalette\Ind{}}}
\def\Notind#1#2{#1\setbox0=\hbox{$#1x$}\kern\wd0\hbox to 0pt{\mathchardef
\nn=12854\hss$#1\nn$\kern1.4\wd0\hss}\hbox to
0pt{\hss$#1\mid$\hss}\lower.9\ht0 \hbox to
0pt{\hss$#1\smile$\hss}\kern\wd0}
\newenvironment{proof}[1][]{
{\noindent\bf Proof#1}: }{\hfill $\Box$}
\begin{document}
\title{Generic stability and stability}
\author{Hans Adler, Enrique Casanovas and Anand Pillay\thanks{The first and second authors were supported by the Spanish government grant MTM 2011-26840. The  second author was also funded by the Catalan government grant 2009SGR-187.
The third author was supported by EPSRC grant EP/I002284/1.}}
\date{October 22, 2012}
\maketitle

\begin{abstract}
We prove two results about generically stable types $p$ in arbitrary theories.
The first, on existence of strong germs, generalizes results from~\cite{HHM} on stably dominated types.
The second is an equivalence of forking and dividing, assuming generic stability of $p^{(m)}$ for all $m$.
We use the latter result to answer in full generality a question posed by Hasson and Onshuus:
If $p(x)\in S(B)$ is stable and does not fork over $A$ then $p\restriction A$ is stable.
(They had solved some special cases.)
%
\end{abstract}

\section{Stable and generically stable types}
Our notation is standard. We work with an arbitrary complete theory $T$ in
language $L$. $\c$~denotes a `monster model'. $M$ denotes a small elementary
submodel, and $A, B,\dots$ denote small subsets. $L(\c)$ denotes the collection
of formulas with parameters from $\c$, likewise $L(A)$ etc. We sometimes say
$A$-invariant for $Aut(\c/A)$-invariant. By a global type we mean a complete type over $\c$. We assume familiarity with notions from model theory such as heir, coheir, definable type, forking. The book~\cite{Poizat} is a good reference, but see also~\cite{HrushovskiPillay09}.

\subsection*{Stable types}

\begin{defi} \rm Let $\pi(x)$ be a partial type over a set $A$ of parameters.
$\pi$ is \emph{stable} if all complete extensions $p(x)\in S(B)$ of $\pi$ over any set $B\supseteq A$ are definable.
\end{defi}

The definition of \emph{stable partial type} goes back to Lascar and Poizat.
Many other equivalent formulations of this notion are known, which we now mention. These should be considered well known, but  Section 10 of ~\cite{Casanovas} contains proofs and/or references for the  next three Remarks/Facts.

\begin{remark}\label{stable1}  The following are equivalent for any partial type $\pi(x)$ over $A$:
\begin{enumerate}
\item $\pi(x)$ is stable.
\item For  every $B\supseteq A$ there are at most $|B|^{|T|}$ types $p(x)\in S(B)$ extending $\pi(x)$.
\item There are no sequences $(a_i:i<\omega)$ and $(b_i:i<\omega)$ such
that $a_i\models \pi$ for all $i<\omega$ and for some $\varphi(x,y)\in L$,
$\models \varphi(a_i,b_j)\Leftrightarrow i<j$  for all $i,j<\omega$.
\end{enumerate}
\end{remark}


\begin{fact}\label{stable3}
\begin{enumerate}
\item Any extension of a stable type is stable.
\item If $\pi_i(x_i)$ is stable for every $i\in I$, then the partial type $\bigcup_{i\in I}\pi_i(x_i)$ is stable.
\end{enumerate}
\end{fact}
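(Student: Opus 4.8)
The first part is immediate from the definition, so I would dispose of it first. Suppose $\pi(x)$ is a stable partial type over $A$ and $\pi'(x)\supseteq\pi$ is any partial type over some $B\supseteq A$, so that every realization of $\pi'$ realizes $\pi$. Given any $C\supseteq B$ and any complete $q(x)\in S(C)$ extending $\pi'$, note that $q$ also extends $\pi$ and that $C\supseteq A$; hence $q$ is definable by stability of $\pi$. Thus every complete extension of $\pi'$ is definable, i.e.\ $\pi'$ is stable. (Equivalently one may invoke Remark~\ref{stable1}(3): any order-property configuration whose object-parts realize $\pi'$ is one whose object-parts realize $\pi$.)

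For the second part, the plan is to peel off variable-blocks until only two remain. First I would reduce to finite $I$. A single formula $\varphi(x_{i_1},\dots,x_{i_n},y)$ mentions only finitely many of the blocks $x_i$; hence, writing $p$ for a complete extension of $\bigcup_{i\in I}\pi_i$ over some $B$, the $\varphi$-definition of $p$ is just a $\varphi$-definition of the restriction $p\restriction(x_{i_1},\dots,x_{i_n})$, which extends the finite union $\pi_{i_1}\cup\dots\cup\pi_{i_n}$. So if every finite sub-union is stable, then each such restriction is definable, all $\varphi$-definitions exist, and $p$ is definable; this shows $\bigcup_{i\in I}\pi_i$ is stable. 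This detour through definability is worth stressing: for large $I$ the raw count of complete types in the infinite tuple $(x_i)_{i\in I}$ may exceed $|B|^{|T|}$, so Remark~\ref{stable1}(2) cannot be applied to the infinite union directly. A trivial induction on $n$, treating $\pi_{1}\cup\dots\cup\pi_{n-1}$ as a single stable type, then reduces the finite case to the union of two stable types $\pi_1(x_1)$ and $\pi_2(x_2)$ in disjoint variables.

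It remains to show that $\pi_1(x_1)\cup\pi_2(x_2)$ is stable, and here I would use the counting criterion of Remark~\ref{stable1}(2). Fix $B\supseteq A$ and put $\mu=|B|^{|T|}$. The restriction map $r\mapsto r\restriction x_1$ sends the complete extensions of $\pi_1\cup\pi_2$ over $B$ into the complete extensions of $\pi_1$ over $B$, of which there are at most $\mu$ by stability of $\pi_1$; so it suffices to bound each fibre. Fixing a complete extension $s$ of $\pi_1$ over $B$ together with a realization $a_1\models s$, every $r$ in the fibre over $s$ has a realization of the form $(a_1,a_2)$ (realize $r$ and conjugate the first coordinate into place over $B$), and then $r=\tp(a_1a_2/B)$ is recovered from $\tp(a_2/Ba_1)$; since $a_2\models\pi_2$ and $\pi_2$ is stable, there are at most $|Ba_1|^{|T|}=\mu$ such types. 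Hence each fibre has size at most $\mu$ and the total is at most $\mu\cdot\mu=\mu$, so $\pi_1\cup\pi_2$ is stable by Remark~\ref{stable1}(2). The one genuinely substantive point, and the main obstacle, is exactly this two-type step: the two components cannot be counted independently, because a type in $(x_1,x_2)$ is not determined by its two restrictions, and the fibering over a \emph{fixed} realization $a_1$ (so that the residual data is an honest type of $a_2$ over $Ba_1$ extending the stable type $\pi_2$) is precisely what repairs this.
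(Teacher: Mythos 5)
The paper does not actually prove this Fact: it is quoted as well known, with a pointer to Section~10 of~\cite{Casanovas}, so there is no in-paper argument to compare against. Your proof is the standard one, and it is correct under the usual reading in which each block $x_i$ is a finite tuple of variables (which is all the paper ever uses, e.g.\ implicitly in Lemma~\ref{genstable3}). Part~1 is indeed immediate from the definition. For part~2, your two-step structure --- localization to finite sub-unions via the formula-by-formula nature of definability, followed by the fibred counting argument for two blocks --- is exactly the right division of labour. In particular, your warning that Remark~\ref{stable1}(2) cannot be applied directly to a type in infinitely many variables, so that the infinite union must be handled through definability rather than counting, is a genuine and correctly handled point; and your identification of the two-block step as the only substantive one (a type in $(x_1,x_2)$ is not determined by its two restrictions, whence the fibring over a fixed realization $a_1$) is accurate.

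One caveat. The statement as written also allows each $x_i$ itself to be an infinite tuple, and in that case your counting step is vulnerable to precisely the objection you raised against counting for the union: the implication ``stable $\Rightarrow$ at most $|B|^{|T|}$ complete extensions'' can fail for types in infinitely many variables (already in the theory of pure equality), and the computation $|Ba_1|^{|T|}=\mu$ also needs $a_1$ to be a small tuple. The repair is cheap and stays entirely within your framework: in the localization step, restrict $p$ not merely to finitely many blocks but to the finitely many variables actually occurring in the given formula $\varphi$. That restriction extends the union of the projections of the relevant $\pi_{i_k}$ onto those finitely many variables, and each such projection is again a stable type in finitely many variables: any complete extension of the projection is the restriction, to a sub-tuple of variables, of a complete extension of $\pi_{i_k}$ (complete the witnessing tuple by saturation of the monster model), and restricting a definable type to a sub-tuple of its variables preserves definability. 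With that amendment, all of your counting is applied only to types in finitely many variables, and the argument establishes the Fact in its stated generality.
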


\begin{fact}\label{stable4} (i) Let $p(x)\in S(M)$ be stable. The following are equivalent for any global extension $\g{p}$ of $p$:
\begin{enumerate}
\item $\g{p}$ does not fork over $M$.
\item $\g{p}$ is an heir of $p$.
\item $\g{p}$ is a coheir of $p$.
\item $\g{p}$ is the unique global extension of $p$ which is definable over $M$.
\item $\g{p}$ is $M$-invariant.
\end{enumerate}
(ii) Moreover if $A$ is algebraically closed, and $p(x)\in S(A)$ is stable, then $p$ has a unique global nonforking extension.
\end{fact}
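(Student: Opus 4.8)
The plan is to exploit the single consequence of stability that \emph{every} global extension of $p$ is definable, and to isolate the one genuinely stable ingredient---uniqueness of the nonforking extension---which I extract from the non-order property of Remark~\ref{stable1}(3). First I would record the implications that need nothing beyond definability and $M\prec\c$. Since $p\in S(M)$ is definable with parameters in $M$ (stability, with domain $M$), applying its defining scheme over $\c$ produces a global extension $\mathfrak p_0$ that is definable over $M$; an immediate use of $M\prec\c$ shows $\mathfrak p_0$ is an heir of $p$, while definability over $M$ gives at once that $\mathfrak p_0$ is $M$-invariant, and an invariant global type does not fork over $M$. The definable-over-$M$ extension is moreover \emph{unique}: if two global extensions of $p$ have $\varphi$-definitions $\psi^1_\varphi,\psi^2_\varphi\in L(M)$, these agree on all tuples from $M$ (both compute membership in $p$), hence agree over $\c$ by elementarity; so condition~(4) already pins down $\mathfrak p=\mathfrak p_0$. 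None of this uses stability beyond definability.

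The heart of the matter is \emph{stationarity}: a stable $p\in S(M)$ has a unique global nonforking extension, necessarily $\mathfrak p_0$. I would prove this via Morley sequences. Let $(a_i:i<\omega)$ be a Morley sequence over $M$ of a nonforking global extension; each $a_i$ realizes $p$, so Remark~\ref{stable1}(3) forbids any $L$-formula from linearly ordering the $a_i$ (with the remaining tuples as the parameter side), and the usual Ramsey argument upgrades $M$-indiscernibility of the sequence to total indiscernibility of the \emph{set} $\{a_i\}$. Total indiscernibility is exactly the symmetry needed: it lets me identify the type over $\c$ of a realization of a nonforking extension with the average type of such a Morley sequence, which is independent of the chosen extension, yielding uniqueness. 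With stationarity in hand the equivalences of~(i) close up. A coheir of $p$ exists (any global type finitely satisfiable in $M$) and does not fork over $M$, so by uniqueness it equals $\mathfrak p_0$; this establishes~(3) for $\mathfrak p_0$ and shows every coheir equals it. For~(5), an $M$-invariant global type does not fork over $M$, hence equals $\mathfrak p_0$. For~(2), heir--coheir duality together with the symmetry just extracted shows that an heir of the \emph{stable} type $p$ does not fork over $M$, hence equals $\mathfrak p_0$.

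I expect the symmetry/stationarity step to be the main obstacle: in an arbitrary theory forking need not be symmetric and heirs over a model need not be nonforking, so the stable-theory proof cannot simply be imported. The work is to feed the non-order property of the realizations of $p$ into the standard argument (deriving an order from any failure of symmetry and contradicting Remark~\ref{stable1}(3)), thereby recovering \emph{locally to $p$} the total indiscernibility of Morley sequences on which everything else rests. This is also the one place repairing the heir clause~(2), where symmetry of the general theory is unavailable.

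For part~(ii), with $A=\acl(A)$ and $p\in S(A)$ stable, I would run the same machinery but replace ``definable over $M$'' by ``canonical base in $\acli(A)$''. Using the finite equivalence relation theorem localized to the stable type $p$, the canonical definitions of a nonforking extension lie in $\acli(A)$; since $A$ is algebraically closed these parameters are fixed by $\Aut(\c/A)$, so two nonforking extensions share a defining scheme and coincide. (Alternatively one reduces to~(i) by passing to a model $M\supseteq A$ chosen independent from the relevant data and transferring uniqueness back down along $A=\acl(A)$.)
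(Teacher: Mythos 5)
First, a point of reference: the paper does not prove this Fact at all --- it is cited to Section 10 of Casanovas's notes --- so your proposal has to stand on its own. The soft part of your plan is correct: using only definability of $p$ over $M$ and $M\prec\c$, one gets the canonical global extension $\g{p}_0$ via the defining scheme, its uniqueness among definable-over-$M$ extensions, the heir property, $M$-invariance, and nonforking. (In fact clause (2) needs none of your later machinery: if $\g{q}$ is any heir and $\varphi(x,c)\in\g{q}$ while $\models\neg\psi_\varphi(c)$, apply the heir property to $\varphi(x,y)\wedge\neg\psi_\varphi(y)$ to get a contradiction in $M$; so every heir equals $\g{p}_0$ directly, and your detour through ``heir--coheir duality plus symmetry'' --- which would itself require proving symmetry for stable types --- is unnecessary.)

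The genuine gap is in the step you yourself call the heart of the matter, stationarity. Two problems. First, a ``Morley sequence over $M$'' of an arbitrary nonforking extension $\g{q}$ is not known to be $M$-indiscernible, because $\g{q}$ is not known to be $M$-invariant (that is what you are trying to prove); this is repairable --- stability makes $\g{q}$ definable over some small $B\supseteq M$, hence $B$-invariant, and one works over $B$ --- but you never address it. Second, and not repairable as written: your key assertion that the average type of such a Morley sequence ``is independent of the chosen extension'' \emph{is} the statement of stationarity, and nothing in your sketch proves it. Total indiscernibility (which you correctly extract from Remark~\ref{stable1}(3)) shows that \emph{each} nonforking extension equals the average of \emph{its own} Morley sequence; it gives no comparison whatsoever between the Morley sequences of two different nonforking extensions $\g{q}_1,\g{q}_2$ --- showing those sequences have the same type over $M$ is exactly the content of the theorem. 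The known proofs need a further real argument at this point: either inserting a realization of an arbitrary nonforking extension into a Morley sequence of $\g{p}_0$ via the standard non-dividing/indiscernibility lemma (the Pillay--Tanovi\'c route used for point~\ref{genstable2.3} of Fact~\ref{genstable2}), or local $R(-,\varphi,2)$-ranks, finite multiplicity and definability of definitions over $\acli(M)$ (the classical route). The same difficulty, compounded, sinks your part~(ii): you invoke ``the finite equivalence relation theorem localized to the stable type $p$,'' which is nowhere available and is at least as deep as the claim being proved, and your alternative (``transferring uniqueness back down'' from a model) hides precisely the same multiplicity problem; moreover the \emph{existence} of a global nonforking extension over a non-model $A$ needs its own argument, since the defining-scheme construction of $\g{p}_0$ uses elementarity of $M$ in $\c$ and fails over arbitrary sets. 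Finally, in (ii) ``algebraically closed'' must be read in $\ci$: if $A$ is only closed in the real sorts, the imaginary canonical parameters need not be fixed by $\Aut(\c/A)$ (consider the theory of an equivalence relation with two infinite classes, $A=\emptyset$), so your sentence ``since $A$ is algebraically closed these parameters are fixed by $\Aut(\c/A)$'' is not justified as stated.
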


\begin{lemma}\label{acl} If $p(x)\in S(A)$ and some extension of $p$ over $\acl(A)$ is stable, then $p$ is stable.
\end{lemma}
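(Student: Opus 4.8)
The plan is to argue by contradiction, using the combinatorial criterion for stability in Remark~\ref{stable1}(3): I will take an order property witnessed by realizations of $p$ and transfer it to one witnessed by realizations of a \emph{stable} extension of $p$ over $\acl(A)$.

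First I would record a key observation: any two complete extensions $q_1,q_2\in S(\acl(A))$ of $p$ are conjugate under $\Aut(\c/A)$. Indeed, choosing $a_l\models q_l$ we have $a_1\equiv_A a_2$ since both realize $p$, so some $\sigma\in\Aut(\c/A)$ sends $a_1$ to $a_2$; as $\acl(A)$ is setwise $\Aut(\c/A)$-invariant, $\sigma(q_1)=q_2$. Because the order property of Remark~\ref{stable1}(3) is both preserved and reflected by automorphisms, stability is an $\Aut(\c)$-invariant property of a type. Hence, letting $q$ be the hypothesized stable extension of $p$ over $\acl(A)$, \emph{every} complete extension of $p$ over $\acl(A)$ is stable.

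Now suppose toward a contradiction that $p$ is unstable. By Remark~\ref{stable1}(3) there are $(a_i:i<\omega)$ with $a_i\models p$, a formula $\varphi(x,y)$, and $(b_j:j<\omega)$ with $\models\varphi(a_i,b_j)\Leftrightarrow i<j$. By the standard Ramsey-and-compactness extraction I may assume $(a_ib_i:i<\omega)$ is $A$-indiscernible; indiscernibility then forces $\models\varphi(a_i,b_j)\Leftrightarrow i<j$ to persist (the truth value depends only on the order type of $(i,j)$, and it was $\top$ for $i<j$, $\bot$ for $i\ge j$ on the original sequence). The extracted elements still realize $p=\tp(a_0/A)$. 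The crucial point is that the members of an infinite $A$-indiscernible sequence all have the same strong type over $A$, i.e.\ the same type over $\acle(A)$, and in particular the same type $q'\in S(\acl(A))$ extending $p$. By the previous paragraph $q'$ is stable. But then $(a_i)$ and $(b_j)$ exhibit the order property with every $a_i\models q'$, contradicting Remark~\ref{stable1}(3) applied to the stable type $q'$. Therefore $p$ is stable.

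I expect the main obstacle to be conceptual rather than computational. The content of the lemma lies precisely in the fact that stability is \emph{not} inherited under shrinking the base: passing from $q$ over $\acl(A)$ down to $p$ over $A$ enlarges the set of realizations, so one genuinely must control how these extra realizations are distributed. What saves the argument is that they are spread over $\Aut(\c/A)$-conjugate—hence still stable—extensions, together with the linchpin step that an $A$-indiscernible sequence cannot stray across distinct $\acl(A)$-types (its members share a strong type over $A$). This last fact is standard, and it is exactly where the algebraicity of $\acl(A)$ over $A$—each element having only finitely many $A$-conjugates—is used. I would make sure the extraction really preserves the half-graph pattern and that the extracted elements still realize $p$, and then cite the strong-type fact as the one nontrivial external ingredient.
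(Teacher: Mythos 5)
Your proof is correct, and its first step---the extensions of $p$ over $\acl(A)$ are $\Aut(\c/A)$-conjugate, hence all stable once one of them is---is exactly the paper's first step. Where you diverge is in how stability is pulled back down to $p$: the paper finishes with a counting-types argument via Remark~\ref{stable1}(2), whereas you use the order-property criterion of Remark~\ref{stable1}(3). The paper's count runs roughly as follows: given $B\supseteq A$, every completion of $p$ over $B\cup\acl(A)$ restricts to a (stable) completion of $p$ over $\acl(A)$, hence is itself stable by Fact~\ref{stable3}(1) and thus definable over $B\cup\acl(A)$; there are at most $|B|^{|T|}$ definable types over $B\cup\acl(A)$, and restriction maps them onto the completions of $p$ over $B$, giving the bound required in Remark~\ref{stable1}(2). (Some such detour through definability, or through completions over $B\cup\acl(A)$, is genuinely needed: the number of completions of $p$ over $\acl(A)$ by itself is not obviously bounded by $|B|^{|T|}$, so one cannot simply multiply that number by the number of extensions of each completion.) Your route instead transfers a half-graph witnessing instability of $p$ into one witnessing instability of a single completion $q'$ over $\acl(A)$, using indiscernible extraction (with the verification, which you carry out, that the pattern $\models\varphi(a_i,b_j)\Leftrightarrow i<j$ survives extraction) together with the standard fact that members of an infinite $A$-indiscernible sequence share a strong type over $A$, hence a type over $\acl(A)$. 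Both arguments are sound; yours is more combinatorial and self-contained modulo those two standard facts, and it isolates exactly where the algebraicity of $\acl(A)$ over $A$ enters, while the paper's is shorter given the toolkit already assembled in Section~1 and avoids Ramsey-type extraction and strong types altogether.
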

\begin{proof} The assumption implies that all extensions of $p$ over $\acl(A)$ are stable (because they are $A$-conjugate). A counting types argument gives then the result.
\end{proof}

\subsection*{Generically stable types}

Suppose $\g p$ is a global complete type which is~$A$-invariant for some small set~$A$  (in other words for a given  $L$-formula $\phi(x,y)$ whether or not $\phi(x,b)$ is in $\g p$ depends only on $tp(b/A)$). 
A Morley sequence in~$\g p$ over~$A$ then consists of a realization $a_0$ of $\g{p}\restriction A$, a realization $a_1$ of $\g p\restriction Aa_0$, a realization $a_2$ of $\g p\restriction Aa_0a_1$ etc., and one defines $\g{p}^{(n)}\restriction A$ to be $\tp(a_0,\dots,a_{n-1}/A)$. Of course if $A\subseteq B$, then $\g{p}^{(n)}\restriction A \subseteq \g{p}^{(n)}\restriction B$,  and we let  $\g{p}^{n}$ (a complete global type) denote the union as $B$ varies (over small sets). \

The notion of generically stable type originates in works of Shelah, was studied in essentially an $NIP$ environment in~\cite{HrushovskiPillay09}, and in arbitrary first order theories in~\cite{PillayTanovic09}. The following definition comes from the latter paper.

\begin{defi}\rm
 A global type $\g{p}(x)$ is \emph{generically stable over $A$} if it is $A$-invariant and for every ordinal $\alpha\geq \omega$, for every Morley sequence $(a_i:i<\alpha)$ of  $\g{p}$ over $A$ (i.e.,  $a_i\models \g{p}\restriction Aa_{<i}$ for all $i<\alpha)$), the set of all formulas $\varphi(x)$ (over the monster model) satisfied by all but finitely many $a_i$ is a complete type.
\end{defi}


\begin{ex}
There exists a theory $T_{\rm feq2}^*$ which has a generically stable global type $\g p$, invariant over~$\emptyset$,
such that $\g p^{(2)}$ is not generically stable:
There exists a Morley sequence $(a_i : i<\omega)$ of $\g p$ over~$\emptyset$ and a formula $\varphi(x,x',c)$
with parameter~$c$ such that $\models\varphi(a_{2i},a_{2i+1},c)$ holds if and only if $i$ is even.
\end{ex}

\begin{proof}
We should think of $T_{\rm feq2}$ as a two-sorted theory with sorts $P$ (`points') and
$E$ (`equivalence relations') and a single ternary relation that may hold between two elements of~$P$
and one element of $E$, which we will write as $p\sim_eq$.
The axioms of $T_0$ say that each $\sim_e$ is an equivalence relation on~$P$ such that every class
has precisely two elements.
We then add a binary function $f\colon P\times E\to P$ such that each $f_e$ is the involution
on $P$ which fixes every $e$-class but swaps its two elements.
After this modification, the finite models of $T_{\rm feq2}$ have a Fra\"\i ss\'e limit.
Let $T_{\rm feq2}^*$ be its theory. Recall that every theory derived in this way has quantifier elimination.

Let $\g p$ be the generic global type of an element of~$P$.
It is easy to see that $\g p$ is generically stable and definable over~$\emptyset$.
Let $(a_i : i<\omega)$ be a Morley sequence of $\g p$ over~$\emptyset$.
Observe that the sequence of pairs $((a_{2i},a_{2i+1}):i<\omega)$ is a Morley sequence of $\g p^{(2)}$.
By compactness and quantifier elimination we can find an element $e$ of sort~$E$ such that
$a_{2i}\sim_ea_{2i+1}$ if and only if $i$ is even.
\end{proof}

$T_{\rm feq2}^*$ is a variant of Shelah's theory $T_{\rm feq}^*$, the difference being that the equivalence
classes in $T_{\rm feq2}^*$ are unbounded in size. It is interesting to note that in $T_{\rm feq}^*$,
the global generic type of an element of~$P$ is not generically stable.\\

See Section 2 of~\cite{PillayTanovic09} for proofs of the following Remark and Fact, except for Fact 1.9.5 which is due to the third author and appears in~\cite{GarciaOnshuusUsvyatsov10}.

\begin{remark}\label{genstable1} If a global type $\g{p}$ is generically stable
over $A$, then for every $L$-formula $\varphi(x,y)$ there is some $n_\varphi<\omega$
such that for every $\alpha\geq \omega$, for every Morley sequence $(a_i:i<\alpha)$ of  $\g{p}$ over $A$, for every $b$,  if $\{i<\alpha: \models\varphi(a_i,b)\}$ has cardinality $\geq n_\varphi$, then it is cofinite.  Moreover, for every such Morley sequence, $\g{p}$ is the average type of $(a_i:i<\alpha)$, the set of all formulas  satisfied by almost all $a_i$. In particular, every Morley sequence of a generically stable type is totally indiscernible.
\end{remark}

\begin{fact}\label{genstable2}\begin{enumerate}
\item If a global type $\g{p}$ is generically stable over a set~$A$, then it
is definable over~$A$.
\item If a global type $\g{p}$ is generically stable over a set~$B$ and it is $A$-invariant for some set~$A$, then it is generically stable over~$A$.
\item\label{genstable2.3} If a global type $\g{p}$ is generically stable over a set~$A$, then its restriction $\g{p}\restriction A$ is stationary and $\g{p}$ is its only global nonforking extension.
\item If a global type $\g{p}$ is generically stable over a set~$B$ and does not fork over a subset $A\subseteq B$, then it is definable over~$\acli(A)$.
\item\label{genstable2.6} If $\g{p}$ is generically stable over a set~$A$, for any  $a\models \g{p}\restriction A$, for any $b$ such that $\tp(b/A)$ does not fork over~$A$:
$$a\ind_A b \;\;\Leftrightarrow\;\;b\ind_A a$$
(Here the notation $a\ind_A b$ means $tp(a/A,b)$ does not fork over~$A$.)
\end{enumerate}
\end{fact}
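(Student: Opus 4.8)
The plan is to reduce the biconditional to a statement in which the generically stable type appears on a single side, and then to exploit total indiscernibility of its Morley sequences. First, since $a\models\g p\restriction A$, the stationarity clause (part~\ref{genstable2.3} of Fact~\ref{genstable2}) tells us that $\g p\restriction A$ is stationary with unique global nonforking extension $\g p$. Hence $\tp(a/Ab)$ does not fork over $A$ iff it extends to a global type nonforking over $A$, and any such extension is a global nonforking extension of $\g p\restriction A$, so equals $\g p$. In other words,
$$a\ind_A b \iff a\models\g p\restriction Ab. \qquad (\star)$$
It therefore suffices to prove the symmetric equivalence $a\models\g p\restriction Ab \iff b\ind_A a$; combined with $(\star)$ this yields the desired $a\ind_A b\iff b\ind_A a$.

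For the direction $a\models\g p\restriction Ab\Rightarrow b\ind_A a$, I would build a Morley sequence $(a_i:i<\omega)$ of $\g p$ over $Ab$ with $a_0=a$. Since $\g p$ is $A$-invariant it is $Ab$-invariant, so by part~(2) of Fact~\ref{genstable2} it is generically stable over $Ab$; hence by Remark~\ref{genstable1} the sequence $(a_i)$ is totally indiscernible over $Ab$, and it is simultaneously a Morley sequence of $\g p$ over $A$. Now for any formula $\psi(y,a)\in\tp(b/Aa)$ with $\psi$ over $A$ we have $\models\psi(b,a_0)$, and indiscernibility over $Ab$ gives $\models\psi(b,a_i)$ for all $i$; thus this canonical Morley sequence witnesses that $\psi(y,a)$ does not divide over $A$. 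Using that for the generically stable type dividing over $A$ is detected by its Morley sequence (a Kim-style lemma) together with the definability of $\g p$ over $A$ (part~(1) of Fact~\ref{genstable2}), one assembles a global extension of $\tp(b/Aa)$ that does not divide, hence does not fork, over $A$, giving $b\ind_A a$.

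The reverse implication $b\ind_A a\Rightarrow a\models\g p\restriction Ab$ is the delicate point, and I expect it to be the main obstacle. Equivalently, by $(\star)$ one must show $a\nind_A b\Rightarrow b\nind_A a$: if $a$ is not generic over $b$, so some $\phi(x,b)\in\g p$ fails of $a$, one must manufacture a forking formula in $\tp(b/Aa)$. The difficulty is twofold. An arbitrary $A$-indiscernible sequence of realizations of $\g p\restriction A$ need not be a Morley sequence of $\g p$, so the average-type description cannot be transported naively; and in a general, non-simple theory non-dividing of every formula of a type does not imply non-forking of the type. Both issues are to be resolved by the full strength of generic stability: $\g p$ is at once definable over $A$ and finitely satisfiable in a small model, and a generically stable type commutes, in the sense of the Morley/tensor product, with any invariant type. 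This ``dfs'' behaviour lets one run the classical stable symmetry argument locally---passing if necessary to a model $M\supseteq A$ and descending via stationarity---so that $\tp(ab/A)$ is governed by a product of invariant types whose commutativity forces the two independence statements to coincide. The crux, and the step I expect to be hardest, is to carry out this commutativity argument over the set $A$ rather than over a model, which is precisely where stationarity of $\g p\restriction A$ and uniqueness of its nonforking extension are invoked once more.
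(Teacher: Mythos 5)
A preliminary remark: the paper does not actually prove Fact~\ref{genstable2}; items 1--4 are cited from \cite{PillayTanovic09} and item 5 from \cite{GarciaOnshuusUsvyatsov10}, so your attempt must be judged on its own merits. Your proposal addresses only item 5, taking items 1--3 as given (a reasonable division of labour), and your reduction $(\star)$, that $a\ind_A b$ iff $a\models\g p\restriction Ab$, is correct by stationarity. The rest, however, contains a genuine gap.

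Your direction $a\models\g p\restriction Ab\Rightarrow b\ind_A a$ cannot work as designed, because it makes no use of the hypothesis that $\tp(b/A)$ does not fork over $A$, and the implication is false without that hypothesis. Take $T$ to be the theory of two non-interacting sorts: an infinite pure set, whose generic type $\g p$ is generically stable over $A=\emptyset$, and a dense circular order. Let $a$ be any element of the first sort and $b$ any element of the second. Then $a\models\g p\restriction Ab$ and $\g p$ is an $A$-invariant global extension of $\tp(a/Ab)$, so $a\ind_A b$; but $\tp(b/\emptyset)$ forks over $\emptyset$ (the classical circular-order example of forking without dividing), hence $\tp(b/Aa)$ forks over $A$, i.e.\ $b\nind_A a$. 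This example pinpoints the two unsound steps in your sketch. First, exhibiting one $A$-indiscernible sequence along which $\psi(y,a)$ is realized says nothing about non-dividing, which quantifies over \emph{all} such sequences; and the ``Kim-style lemma'' you invoke is not available here --- the paper's forking-equals-dividing result, Lemma~\ref{genstable4}, requires every power $\g p^{(n)}$ to be generically stable and, more importantly, concerns formulas $\varphi(x,m)$ in the variable of $\g p$, not formulas $\psi(y,a)$ in the opposite variable. Second, and fatally, in the example no formula of $\tp(b/Aa)$ divides over $A$ and yet the type forks, so your concluding step ``does not divide, hence does not fork'' is exactly what fails; this is where the unused hypothesis has to enter. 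The correct route --- for both directions, not just the one you defer it to --- is the commutativity argument you allude to: the hypothesis supplies a global nonforking extension $\g q$ of $\tp(b/A)$, which is Lascar-invariant over $A$ and hence invariant over any model $M\supseteq A$; one realizes the product of $\g p$ and $\g q$ over such an $M$, uses that a generically stable type commutes with any invariant type, and transfers the conclusion from $M$ down to $A$ using $A$-invariance of $\g p$ and the fact that all realizations of $\g p\restriction Ab$ have the same type over $Ab$. Since you name these ingredients but carry out the construction in neither direction, the proposal as written does not prove item 5.
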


\begin{lemma}\label{generic} If a type $p\in S(M)$ is stable and does not fork over $A\subseteq M$, then the unique global nonforking extension of~$p$ is generically stable over~$\acli(A)$.
\end{lemma}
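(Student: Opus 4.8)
The plan is to first manufacture the global nonforking extension and prove it generically stable over $M$ itself, and then to descend to $\acli(A)$ using the invariance facts already recorded. Since $M$ is a model and $p$ is stable, Fact~\ref{stable4} supplies a unique global nonforking extension $\g p$ of $p$ and tells us that $\g p$ is simultaneously definable over $M$ (the heir of $p$), finitely satisfiable in $M$ (the coheir of $p$), and $M$-invariant. Moreover, by transitivity of nonforking, from the facts that $\g p$ does not fork over $M$ and that $p=\g p\restriction M$ does not fork over $A$ we obtain that $\g p$ does not fork over $A$; this will be needed at the very end.

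The heart of the argument is to verify that $\g p$ is generically stable over $M$. Fix $\alpha\geq\omega$ and a Morley sequence $I=(a_i:i<\alpha)$ of $\g p$ over $M$; I must show that the set of formulas satisfied by all but finitely many $a_i$ is a complete type. Since $\g p$ is $M$-invariant, $I$ is $M$-indiscernible, and since each $a_i$ realizes the stable type $p$, $I$ is in fact totally indiscernible: an ordered indiscernible sequence of realizations of $p$ that failed to be an indiscernible set would, through a formula $\varphi(x,y)$ with $\models\varphi(a_i,a_j)\Leftrightarrow i<j$, exhibit exactly the order configuration forbidden by the stability of $p$ in Remark~\ref{stable1}(3). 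Granting total indiscernibility, I claim that for every formula $\varphi(x,b)$ over $\c$ the set $\{i<\alpha:\models\varphi(a_i,b)\}$ is finite or cofinite. If it were infinite and co-infinite, then, $I$ being an indiscernible set, for each $j<\omega$ the partial type in the variable $y$ asserting $\varphi(a_i,y)$ for $i<j$ and $\neg\varphi(a_i,y)$ for $j\leq i<\omega$ is consistent, each of its finite subsets being realizable by total indiscernibility; realizing it by some $b_j$, the sequences $(a_i)_{i<\omega}$ and $(b_j)_{j<\omega}$ would witness the order property on realizations of $p$, again contradicting Remark~\ref{stable1}(3). Consequently exactly one of $\varphi(x,b)$, $\neg\varphi(x,b)$ holds of cofinitely many $a_i$, and since finite intersections of cofinite sets are nonempty, the resulting average is a consistent complete type. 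Thus $\g p$ is generically stable over $M$.

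Finally I descend to $\acli(A)$. Having shown $\g p$ generically stable over $M$, and recalling that it does not fork over $A\subseteq M$, part (4) of Fact~\ref{genstable2} applies and yields that $\g p$ is definable over $\acli(A)$; in particular $\g p$ is $\acli(A)$-invariant. Now part (2) of Fact~\ref{genstable2}, applied with the base $M$ over which generic stability is already known and with the invariance base $\acli(A)$, gives that $\g p$ is generically stable over $\acli(A)$, as required.

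I expect the main obstacle to be the core dichotomy in the middle paragraph: establishing total indiscernibility of the Morley sequence and the finite-or-cofinite behaviour of definable traces. This is precisely where the stability of $p$ must enter, since the passage from being definable-and-finitely-satisfiable to being generically stable is unavailable in general theories and genuinely depends on the absence of the order property on $p$ recorded in Remark~\ref{stable1}. The surrounding steps are, by contrast, bookkeeping with the quoted facts about stable types and about the descent of generic stability.
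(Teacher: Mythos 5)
Your decomposition is the same as the paper's: obtain the unique global nonforking extension $\g p$ of $p$, prove it is generically stable over $M$, then descend to $\acli(A)$ via parts (4) and (2) of Fact~\ref{genstable2}. Your long middle paragraph is correct, and it is exactly the step the paper dispatches with the single sentence ``stability of $p$ implies generic stability of $\g{p}$ over $M$'', so you have filled in a detail the paper delegates to its references. (One small imprecision there: failure of total indiscernibility of an indiscernible sequence produces an order-property formula $\varphi(x,\bar y)$ whose right-hand instances are \emph{tuples} of elements of the sequence, not single terms $a_j$; this is harmless, because Remark~\ref{stable1}(3) places no constraint on the $b_j$'s, only on the $a_i$'s.)

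The genuine problem is your appeal to ``transitivity of nonforking'': from ``$\g p$ does not fork over $M$'' and ``$\g p\restriction M$ does not fork over $A$'' you conclude that $\g p$ does not fork over $A$. Transitivity of forking independence is \emph{not} a valid principle in an arbitrary complete theory: it is a theorem about simple theories (and holds in various restricted settings), but this paper assumes nothing about $T$, and the lemma is applied in that full generality. Since nonforking of $\g p$ over $A$ is precisely the hypothesis needed to invoke Fact~\ref{genstable2}(4), your final paragraph currently rests on an unsupported step. The repair is cheap, and it is exactly how the paper arranges the argument: because $p$ does not fork over $A$, the basic extension property of nonforking (the formulas forking over $A$ form an ideal) yields a \emph{global} extension $\g p'$ of $p$ that does not fork over $A$; since any formula dividing over $M$ also divides over $A$ (an $M$-indiscernible sequence is $A$-indiscernible), $\g p'$ does not fork over $M$; hence by the uniqueness clause of Fact~\ref{stable4}, $\g p'=\g p$, so $\g p$ does not fork over $A$. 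In other words, the instance of ``transitivity'' you need is true, but it follows from uniqueness of the nonforking extension of a stable type over a model, not from a general transitivity principle.
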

\begin{proof} Let $\g{p}$ be a global extension of $p$ that does not fork over $A$. So $\g{p}$ does not fork over $M$ and by Fact 1.4 coincides with the unique global nonforking extension of~$p$. Stability of~$p$ implies generic stability of $\g{p}$ over $M$. By Fact 1.9.4, $\g{p}$ is generically stable over $\acli(A)$.
\end{proof}

\begin{lemma}\label{genstable3}
\begin{enumerate}
\item Suppose $A\subseteq M$, the global type $\g{p}$ is generically stable over  $A$ (so also over $M$)  and $\g{p}\restriction M$ is stable, then for all $n<\omega$, $\g{p}^{(n)}$ is generically stable over $A$.
\item The same conclusion holds if $\g{p}$ is generically stable over $A$ and $T$ is NIP.
\end{enumerate}
\end{lemma}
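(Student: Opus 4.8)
The plan is to handle both parts through one common observation: a Morley sequence of $\g{p}^{(n)}$ over~$A$ unpacks into a Morley sequence of $\g{p}$ over~$A$. Indeed, if $(\bar a_j:j<\alpha)$ is a Morley sequence of $\g{p}^{(n)}$ over~$A$, writing $\bar a_j=(a_{j,0},\dots,a_{j,n-1})$, then each block realizes $\g{p}^{(n)}\restriction A\bar a_{<j}$, i.e.\ it is a $\g{p}$-Morley sequence over $A\bar a_{<j}$; concatenating the blocks in order yields a single Morley sequence of~$\g{p}$ over~$A$ of length $\geq\omega$. Since $\g{p}$ is generically stable over~$A$, Remark~\ref{genstable1} says this flattened sequence is totally indiscernible, and since permuting whole blocks is a special case of permuting the underlying elements, the sequence of $n$-tuples $(\bar a_j:j<\alpha)$ is itself totally indiscernible. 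In both parts $\g{p}^{(n)}$ is visibly $A$-invariant (whether $\varphi(\bar x,b)\in\g{p}^{(n)}$ is computed by iterating the $A$-invariant $\g{p}$, so it depends only on $\tp(b/A)$), so the whole content of the lemma is the completeness of the average of such sequences.

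For part~(1) I would argue through stability and Lemma~\ref{generic}. First, $\g{p}^{(n)}\restriction M=\tp(a_0,\dots,a_{n-1}/M)$ extends $\bigcup_{i<n}(\g{p}\restriction M)(x_i)$, a union of copies of the stable type $\g{p}\restriction M$, which is stable by Fact~\ref{stable3}; hence $\g{p}^{(n)}\restriction M$ is stable, again by Fact~\ref{stable3}. Next, $\g{p}^{(n)}$ is $A$-invariant, so in particular $M$-invariant, whence Fact~\ref{stable4}(i) identifies it as the unique global nonforking extension of the stable type $\g{p}^{(n)}\restriction M$. Being $A$-invariant, $\g{p}^{(n)}$ does not fork over~$A$ (a global $A$-invariant type never forks over~$A$), so neither does its restriction $\g{p}^{(n)}\restriction M$. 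Now Lemma~\ref{generic}, applied to $\g{p}^{(n)}\restriction M\in S(M)$, which is stable and does not fork over~$A$, tells us its unique global nonforking extension, namely $\g{p}^{(n)}$, is generically stable over $\acli(A)$. Finally, $A$-invariance of $\g{p}^{(n)}$ together with Fact~\ref{genstable2}(2) pushes this down to generic stability over~$A$.

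For part~(2) I would use the total indiscernibility established above together with NIP. Fix a formula $\psi(\bar x,b)$ and set $S=\{j<\alpha:\models\psi(\bar a_j,b)\}$. If $S$ were both infinite and coinfinite, then by total indiscernibility of $(\bar a_j:j<\alpha)$ one could extract a subsequence along which $\psi(\cdot,b)$ alternates between truth and falsity infinitely often; being a subsequence of a totally indiscernible sequence it is itself indiscernible, so it would witness the independence property for $\psi$, contradicting NIP of~$T$. Hence $S$ is finite or cofinite. Consequently, for every $\psi$ exactly one of $\psi,\neg\psi$ holds of cofinitely many~$\bar a_j$, so the set of formulas satisfied by almost all $\bar a_j$ is a complete and finitely satisfiable, hence consistent, type. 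As this holds for every Morley sequence of $\g{p}^{(n)}$ over~$A$ of length $\geq\omega$, the type $\g{p}^{(n)}$ is generically stable over~$A$.

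The step I expect to require the most care is pinning down the exact role of the hypotheses, since the Example above shows the conclusion is genuinely false in general: any argument must use stability or NIP precisely to forbid the infinite alternation exhibited there. Concretely, in part~(1) the delicate point is to verify that $\g{p}^{(n)}$ really is the nonforking extension of $\g{p}^{(n)}\restriction M$ over~$M$ \emph{and} that nonforking over~$A$ is retained, so that Lemma~\ref{generic} applies with base~$A$; in part~(2) it is the upgrade from the merely finite alternation that NIP guarantees on a linearly ordered indiscernible sequence to the finite-or-cofinite dichotomy, which is exactly what the total indiscernibility of the flattened Morley sequence buys us.
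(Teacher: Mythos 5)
Your proposal is correct and takes essentially the same approach as the paper: part (1) reduces, via stability of $\g{p}^{(n)}\restriction M$ and nonforking over $A$ coming from invariance, to Lemma~\ref{generic} followed by Fact~\ref{genstable2}(2), and part (2) flattens a Morley sequence of $\g{p}^{(n)}$ into one of $\g{p}$, invokes total indiscernibility from Remark~\ref{genstable1}, and derives an infinite alternation contradicting NIP. The only differences are presentational: you spell out the stability of $\g{p}^{(n)}\restriction M$ via Fact~\ref{stable3}, and in part (2) you extract an alternating subsequence where the paper instead relocates the parameter to produce the even/odd pattern --- two equivalent uses of total indiscernibility.
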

\begin{proof}
1. Clearly the type $\g{p}^{(n)}$ is $A$-invariant, and $M$-invariant, and $\g{p}^{(n)}\restriction M$ is stable. Hence
$\g{p}^{(n)}$ does not fork over $A$. So by Lemma 1.10, $\g{p}^{(n)}$ is generically stable over ~$\acli(A)$. By $A$-invariance it is generically stable over $A$.


2. Definability of $\g p$ over~$A$ (which we have by Fact~\ref{genstable2})
is always sufficient for definability of $\g p^{(n)}$ over~$A$.
Note that a Morley sequence of $\g p^{(n)}$ over~$A$ is the same thing as the sequence of successive $n$-tuples in
a Morley sequence of $\g p$ over~$A$. Therefore by Remark~\ref{genstable1} it is totally indiscernible over~$A$.
If $(b_i:i<\kappa)$ is a Morley sequence of $\g p^{(n)}$ such that some formula $\varphi(y,c)$ is satisfied
by neither finitely nor cofinitely many elements of the sequence, then by total indiscernibility
there is a $c'$ such that for $i<\omega$ we have $\models\varphi(b_i,c')$ if and only if $i$~is even.
This contradicts NIP.
\end{proof}

Note that a notion of NIP type suitable for the above lemma can be defined such that every type in a NIP theory and every stable type an arbitrary theory is a NIP type.

%

\section{Main results}
\subsection*{Strong germs}
\nocite{HHM}
We give a proof of the existence of strong germs of definable functions on generically stable types (where the notion is explained below).  This generalizes the same result for the more restricted class of ``stably dominated types", and whose proof in  Chapter 6 of \cite{HHM} was quite involved (using for example a combinatorial lemma from Chapter 5 of the same book). This result played a role in the structural analysis of algebraically closed valued fields in \cite{HHM} as well as later (unpublished) work of Hrushovski on metastable groups. In any case
the third author claimed that there was a simpler proof of the existence of strong germs, assuming only generic stability, and wrote it down for Dugald Macpherson in 2008, under a $NIP$ assumption. In fact the current authors noted that the proof works without any $NIP$ assumption and with the definition of generic stability in Section 1. So it is convenient to include the result and proof in the current paper.

In this section, we will sometimes step outside the monster model~$\c$ and consider realizations of a global type $\g p\in S(\c)$. This is purely for notational convenience, and any tuples and sets not explicitly said to realize a global type will always live in the monster model and be small with relation to it, as usual.

Let $\g p(x)\in S(\c)$ be a global type which is definable over a small set $A\subset\c$. Let $f_c$ be a
partial definable function, defined with a parameter $c$ such that for $a$ realizing $\g p$, $f_c(a)$ is defined.
(We say that $f_c$ is defined on $\g p$.) We define an equivalence relation $E$ on realizations of $tp(c/A)$ in $\c$ (or on a suitable $A$-definable set $Y$ contained in $tp(c/A)$): $E(c_1,c_2)$ if  $f_{c_1}(a) = f_{c_2}(a)$ for some (any) $a$ realizing $\g p$.

By definability of $\g p$, $E$ is $A$-definable, and $c/E$ is called the germ of $f_c$ on $\g p$.
We say that this germ is \emph{strong} over~$A$,
if for $a$ realizing $\g p$  (or realizing $\g{p}\restriction (A,c)$), $f_c(a)\in\dcl(c/E, a, A)$. Note that this amounts to their being a $c/E$-definable function $g_{c/E}(-)$ such for  $a$ realizing $\g{p}\restriction (A,c)$, $f_{c}(a) = g_{c/E}(a)$. So the function $f_{c}$ is definable over its germ.


As any global type $\g p\in S(\c)$ that is generically stable over a set~$A$ is definable over~$A$, we can speak of germs of definable functions on $\g p$.

%
%
%
%

The following Lemma is key for the proof of Theorem~\ref{thmgerm}.
\begin{lemma}\label{lemgerm} Suppose the global type $\g q(x,y) = tp(a,b/\c)$ is generically stable over a set~$A\subset\c$.
\begin{enumerate}
\item If $b\in\acl(a,\c)$, then there is $n <\omega$ such that for any Morley sequence $((a_i,b_i):i<\omega)$
in $\g q$ over $A$, we have $b_n\in\acl(A, a_0,b_0,\dots,a_{n-1},b_{n-1},a_n)$.
\item If $b\in\acl(a,\c)$, then $b\in\acl(a,A)$.
\item\label{lemgerm.3} If $b\in\dcl(a,\c)$, then $b\in\dcl(a,A)$.
\end{enumerate}
\end{lemma}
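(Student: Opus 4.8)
The plan is to treat (1) as the real content and to read off (2) and (3) afterwards. Throughout, fix a formula $\varphi(x,y,z)$ and a parameter $c\in\c$ witnessing $b\in\acl(a,\c)$, so that $\models\varphi(a,b,c)$ and $\models\exists^{\leq k}y\,\varphi(a,y,c)$ for some $k<\omega$; equivalently $\varphi(x,y,c)\in\g q$ and $\exists^{\leq k}y\,\varphi(x,y,c)\in\g p$, where $\g p=\g q\restriction x$ is the (again generically stable) projection. Since the type over $A$ of a Morley sequence of $\g q$ is uniquely determined, and the conclusion of (1) depends only on that type, it suffices to verify (1) for one convenient Morley sequence. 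I would take $((a_i,b_i):i<\omega)$ realised over $\c$ itself, i.e. $(a_i,b_i)\models\g q\restriction\c a_{<i}b_{<i}$; then $\models\varphi(a_i,b_i,c)\wedge\exists^{\leq k}y\,\varphi(a_i,y,c)$ for every $i$, so $b_i\in\acl(a_i,c)$ with multiplicity $\leq k$, and the single parameter $c$ works uniformly along the sequence.

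For (1), let $M_n$ be the number of realisations of $\tp(b_n/A\,a_{<n}b_{<n}\,a_n)$. Total indiscernibility (Remark~\ref{genstable1}) already makes $(M_n)_n$ non-increasing in $\omega\cup\{\infty\}$: shifting indices by one identifies $\tp(b_{n+1}/A\,a_{[1,n]}b_{[1,n]}\,a_{n+1})$ with $\tp(b_n/A\,a_{<n}b_{<n}\,a_n)$, and adjoining $a_0,b_0$ can only lower multiplicity. Hence it suffices to produce a single $n$ with $M_n<\omega$, i.e. to rule out $b_n\notin\acl(A\,a_{<n}b_{<n}\,a_n)$ for all $n$. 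This non-degeneracy is the step I expect to be the main obstacle: the bound $k$ is available only with the external $c$ present, and $c$ need not be algebraic over any finite part of the sequence. I would argue that $b_0\notin\acl(A a_0)$ together with $b_0\in\acl(a_0 c)$ of multiplicity $\leq k$ forces the algebraic formula $\varphi(a_0,y,c)$ to divide over $A a_0$; feeding the resulting $A a_0$-indiscernible, $(k{+}1)$-inconsistent sequence of copies of $c$ into the totally indiscernible Morley sequence, and using that $\g q$ is the average of that sequence (Remark~\ref{genstable1}), one manufactures a parameter along which $\varphi$ alternates infinitely, contradicting the bounded alternation of $\varphi$. Crucially this must be run at the level of $\g q$ itself, since $\g q^{(2)}$ need not be generically stable (the Example).

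Part (2) is then a descent from (1). Fix such an $n$; by total indiscernibility the bound is position-free, so writing $D$ for an $n$-element block of the sequence placed after a chosen pair $(a_0,b_0)$, we have $b_0\in\acl(A,D,a_0)$ of multiplicity $\leq M$ and $(a_0,b_0)\ind_A D$ (later terms are nonforking over earlier ones). By the symmetry available for generically stable types (Fact~\ref{genstable2}), $D\ind_A (a_0,b_0)$, and base monotonicity gives $D\ind_{Aa_0}b_0$. Since algebraicity over parameters independent from the point descends to the base, $b_0\in\acl(A,a_0)$; as $\tp(a_0b_0/A)=\g q\restriction A=\tp(ab/A)$ and ``$y\in\acl(A,x)$'' is an $A$-invariant condition, this transfers back to the original pair, giving $b\in\acl(a,A)$.

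For (3), assume $b\in\dcl(a,\c)$, so we may take the witness to be $b=g_c(a)$ with $y=g_c(x)\in\g q$; then (2) yields $b\in\acl(a,A)$, so $\tp(b/Aa)$ has only finitely many realisations. Let $\gamma=c/E$ be the germ of $g_c$ on $\g q$. Since $\g q$ is $A$-invariant, every $\rho\in\Aut(\c/A)$ fixes $\g q$ and so sends $y=g_c(x)\in\g q$ to $y=g_{\rho(c)}(x)\in\g q$; comparing the two on realisations of $\g q$ gives $E(c,\rho(c))$, i.e. $\rho$ fixes $\gamma$, whence $\gamma\in\dcli(A)$. It remains to collapse the finitely many $Aa$-conjugates of $b$: each has the form $g_{c'}(a)$ with $c'\equiv_{Aa}c$ and germ $\gamma$, and applying the automorphism witnessing $c'\equiv_{Aa}c$ to $a\ind_A c$ shows $a\ind_A c'$; stationarity of $\g q\restriction A$ (Fact~\ref{genstable2}) then pins $g_{c'}(a)$ to the common germ value $g_c(a)=b$ (this matching of conjugates to the single germ value is the delicate point of (3)). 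Hence $\tp(b/Aa)$ is realised only by $b$, i.e. $b\in\dcl(a,A)$. The genuinely new work sits in (1); parts (2) and (3) are bookkeeping on top of it.
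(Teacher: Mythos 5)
Your part (2) is, modulo part (1), a correct and genuinely different derivation from the paper's: you use symmetry for generically stable types (Fact~\ref{genstable2}), base monotonicity of forking, and descent of algebraicity along a nonforking extension, where the paper uses non-dividing of the type of the initial block over the last pair, an indiscernible sequence of $Aa_n$-conjugates of $b_n$, and a second appeal to (1). But (1), which you rightly identify as carrying the real content, has a genuine gap, and so does (3).

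In (1) the pivotal claim is false: $b_0\in\acl(a_0c)\setminus\acl(Aa_0)$ with multiplicity $\leq k$ does \emph{not} force the formula $\varphi(a_0,y,c)$ to divide over $Aa_0$. In the theory of an infinite pure set take $\varphi(a_0,y,c)$ to be $y=c\vee y=a_0$ and $b_0=c$: then $b_0\in\acl(a_0c)$ with multiplicity $2$ and $b_0\notin\acl(a_0)$, yet every $a_0$-conjugate of this formula has the solution $a_0$, so no indiscernible sequence of copies of $c$ can make the family inconsistent. The standard fact runs in the opposite direction: $\tp(c/Aa_0b_0)$ divides over $Aa_0$, witnessed by an $Aa_0$-indiscernible sequence of copies of $b_0$, with the $(k+1)$-inconsistency living in the variable of $c$; that is of no direct use to your plan. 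Moreover, even granting some dividing configuration, ``one manufactures a parameter along which $\varphi$ alternates infinitely'' is not an argument: to contradict Remark~\ref{genstable1} you must exhibit a single parameter $c^*$ and a single Morley sequence on which $\varphi$ holds infinitely often and fails infinitely often, and no such construction is given. Note that your opening reduction is precisely what costs you here: fixing one Morley sequence (realized over $\c$) discards the freedom the proof needs. The paper argues instead: if (1) fails, then (by your own uniqueness-of-type observation) for \emph{every} $n$ and \emph{every} Morley sequence $b_n\notin\acl(A a_{<n}b_{<n}a_n)$; so one can build a Morley sequence over $A$ recursively, at each step replacing $b_i$ by a conjugate over $Aa_{<i}b_{<i}a_i$ (there are infinitely many, the type being non-algebraic) lying outside $\acl(a_ic)$; the resulting sequence satisfies $\neg\varphi(a_i,b_i,c)$ for all $i$, contradicting the averaging property of Remark~\ref{genstable1}, since $\varphi(x,y,c)\in\g q$.

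In (3), your observation that the germ $\gamma=c/E$ lies in $\dcli(A)$ is correct, but the step you flag as delicate is a real gap, not a detail. Germ equality $E(c,c')$ only says that $g_c$ and $g_{c'}$ agree on points generic over $c$ and $c'$ \emph{jointly}: it is the $\g p$-definition evaluated at the pair $(c,c')$ (note also that your $c'=\sigma(c)$ need not lie in $\c$, so $g_c(x)=g_{c'}(x)$ is not even a formula over $\c$). You have only $a\ind_A c$ and $a\ind_A c'$ separately, and stationarity applied to each of these separately says nothing about $\tp(a/Acc')$; nothing in the construction yields $a\ind_A cc'$. Indeed ``equal germs plus separate genericity implies equal values'' is essentially the strong-germ statement of Theorem~\ref{thmgerm}, which this lemma exists to prove, so leaning on it is circular in spirit. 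The paper's proof of (3) needs no germs at all: if $b''\equiv_{Aa}b$, then $b''\in\acl(Aa)$ by (2), so $\tp(ab''/\c)$ does not fork over $A$ (nonforking is preserved when adjoining a point of $\acl(Aa)$), hence by stationarity (Fact~\ref{genstable2}, point 3) $\tp(ab''/\c)=\g q$, which contains $y=f(x)$; therefore $b''=f(a)=b$.
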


\begin{proof}
1. If the claim is false, then for any $c\in\c$ we can inductively build a Morley sequence
$((a_i,b_i):i<\omega)$ in $\g q$ over~$A$
such that $b_i\notin\acl(a_ic)$ for all $i<\omega$.
Pick a formula $\varphi(x,y,c)\in\g q(x,y)$ which witnesses $b\in\acl(a,\c)$ in the sense that
for some $k$ we have $\c\models\forall x\exists_{<k}y\varphi(x,y,c)$.
It follows that $\models\neg\varphi(a_i,b_i,c)$ for all $i<\omega$,
contradicting Remark~\ref{genstable1}.

%
%
%

2. Fix now $((a_i,b_i):i<\omega)$ a Morley sequence in $\g q$ over~$A$.
Let $d = (a_0,b_0,\dots,a_{n-1},b_{n-1})$.
So $d$ is a realization of $\g q^{(n)}\restriction A$, and by total indiscernibility,
$d$ also realizes $\g q^{(n)}\restriction (Aa_nb_n)$.
Now $\g q^{(n)}$ is definable over~$A$. In particular $tp(d/Aa_nb_n)$ does not divide over~$A$.
We now show that $b_n\in\acl(a_n,A)$.
If not, we can find an infinite indiscernible over~$Aa_n$ sequence $(b_n^j:j<\omega)$
with $b_n^0 = b_n$ and the $b_n^j$'s all different.
Then $((a_n,b_n^j):j<\omega)$ is an indiscernible sequence over~$A$.
Hence there is $d'$ such that $tp(d'a_nb_n^j/A) = tp(da_nb_n/A)$ for all $j$.
But by 1, $b_n^0 = b_n\in\acl(A,d',a_{n})$ and we get a contradiction.

3. By point \ref{genstable2.3} of Fact~\ref{genstable2},
$\g q$ is the unique nonforking extension of $\g q\restriction A$.
Suppose that $b\in\dcl(\c,a)$, but $b\notin\dcl(A,a)$.
So by 2, $b\in\acl(A,a)$, and there is $b'\neq b$ with the same type as $b$ over $Aa$.
In particular $b'\in\acl(A,a)$.  As $tp(a/\c)$ is definable over $A$,
it follows that $\g q' = tp(a,b'/\c)$ does not fork over $A$.
But then $\g q'$ and $\g q$ are distinct global nonforking extensions of $\g q\restriction A$
(for a certain $\c$-definable function $f$, $b = f(a)$, but $b'\neq f(a)$), a contradiction.
\end{proof}

\begin{theorem}\label{thmgerm} Suppose that the global type $\g p\in S(\c)$ is generically stable over a
small set~$A\subset\c$, and $f_c$ is a definable function, defined on $\g p$.
Then the germ of $f_c$ on $\g p$ is strong over~$A$.
\end{theorem}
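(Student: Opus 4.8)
We have a global type $\g p$ generically stable over $A$, and a definable function $f_c$ (with parameter $c$) defined on $\g p$. The germ is $c/E$ where $E(c_1,c_2)$ iff $f_{c_1}(a)=f_{c_2}(a)$ for $a\models\g p$. "Strong" means: for $a\models\g p$, $f_c(a)\in\dcl(c/E,a,A)$. So I need to show $f_c(a)$ is definable from the germ $c/E$, the point $a$, and $A$.

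**Key reduction via Lemma 1.13:** The phrase "$f_c(a)\in\dcl(c/E,a,A)$" is exactly a statement of the form "$b\in\dcl$ of something over $A$." Let me set $b=f_c(a)$. This lives in $\dcl(c,a)\subseteq\dcl(\c,a,c)$... but the issue is that the "extra parameter" over which I want definability is $c/E$, not $\e$ or $A$ alone. So I should work over the base $A'=A\cup\{c/E\}$ (an imaginary element, so really I'm in $\c^{\mathrm{eq}}$ or $\c^{\mathrm{heq}}$).

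Let me think about what I'd prove.

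Consider the pair $(a,b)$ where $a\models\g p$ and $b=f_c(a)$. I want to apply Lemma 1.13(3) to the type $\g q = \tp(a,b/\c)$ over the base $A'=A\,(c/E)$. For this I need two things: first, that $\g q$ is generically stable over $A'$; second, that $b\in\dcl(a,\c)$ — which is clear since $b=f_c(a)$ is a definable function of $a$ with a parameter $c\in\c$. Then Lemma 1.13(3) would yield $b\in\dcl(a,A')=\dcl(a,A,c/E)$, which is precisely the strong germ conclusion.

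**The main obstacle** is establishing generic stability of $\g q=\tp(a,b/\c)$ over $A'=A(c/E)$. The type $\g q$ is the pushforward of $\g p$ under the map $x\mapsto(x,f_c(x))$; since $f_c$ is $c$-definable and $\g p$ is $A$-invariant, $\g q$ is invariant over $Ac$, hence definable over $Ac$. I must improve the base from $Ac$ down to $A(c/E)$. The point is that whether a formula $\psi((x,y),d)$ lies in $\g q$ depends only on whether a corresponding formula lies in $\g p$, and $\g p$ is definable over $A$; the role of $c$ enters only through $f_c$, and two realizations $c_1Ec_2$ give the same function on $\g p$, hence the same type $\g q$. So $\g q$ is fixed by $\Aut(\c/A(c/E))$, i.e., it is $A(c/E)$-invariant. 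By Fact 1.9(2) (an $A'$-invariant generically stable type is generically stable over $A'$), it suffices to know $\g q$ is generically stable over \emph{some} set, which it is: $\g q$ is generically stable over $Ac$ because it is the image of the generically stable $\g p$ under an $Ac$-definable injection-on-$\g p$ (the map $x\mapsto(x,f_c(x))$ is injective in the first coordinate, so Morley sequences and their average types transport, preserving the defining property of generic stability).

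**Plan of the write-up:** First I fix $a\models\g p$ and set $b=f_c(a)$ and $\g q=\tp(a,b/\c)$. Second, I argue $\g q$ is generically stable over $Ac$ by transporting the generic stability of $\g p$ along the $Ac$-definable graph map. Third, I argue $\g q$ is invariant over the imaginary base $A(c/E)$, using the definition of $E$ and definability of $\g p$ over $A$; then Fact 1.9(2) upgrades this to generic stability of $\g q$ over $A(c/E)$. Finally, since $b=f_c(a)\in\dcl(\c,a)$, Lemma 1.13(3) applied over the base $A(c/E)$ gives $b\in\dcl(a,A,c/E)$, which says $f_c(a)=g_{c/E}(a)$ for some $(c/E)$-definable function $g$ over $A$ — exactly that the germ is strong over $A$. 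The delicate point to get right is the bookkeeping of imaginaries (working in $\c^{\mathrm{eq}}$ or $\c^{\mathrm{heq}}$ so that $c/E$ is a legitimate parameter and Lemma 1.13 applies with that base), and checking that generic stability is genuinely preserved under the pushforward, which relies on the transport of totally indiscernible Morley sequences and their average types established in Remark 1.7.
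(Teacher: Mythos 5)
Your proposal is correct and takes essentially the same route as the paper's own proof: form $\g q=\tp(a,f_c(a)/\c)$, observe it is generically stable over $Ac$, show it is invariant over $A$ together with the germ $c/E$ (hence generically stable over that base by Fact~\ref{genstable2}(2)), and conclude $f_c(a)\in\dcl(a,A,c/E)$ by Lemma~\ref{lemgerm}(\ref{lemgerm.3}). Your explicit appeal to Fact~\ref{genstable2}(2) even makes precise a step the paper leaves implicit when it passes from ``$\g q$ is $c/E$-invariant'' to applying Lemma~\ref{lemgerm}.
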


\begin{proof}
For notational simplicity we assume $A=\emptyset$.
Let $a$ realize $\g p$  (in of course a model containing~$\c$), and $b = f_c(a)$.
Let $\g q = tp(a,b/\c)$. As $\g p$ is generically stable over~$c$ and $b\in\dcl(a,c)$, clearly
$\g q$ is also generically stable over~$c$.

Let $c/E$ be the germ of $f_c$ at $\g p$.
Towards proving that $\g q$ is actually $c/E$-invariant, let $h$ be any automorphism of $\c$ fixing $c/E$.
Extend to an automorphism $h'$ of the surrounding monster model containing $(a,b)$.
As $tp(a/\c) = tp(h'(a)/\c)$ we may assume $a = h'(a)$. As $E(c,h(c))$, $h'(b) = f_{h(c)}(a) = f_c(a) = b$.
So $h'(a,b) = (a,b)$.
Hence $\g q$ is fixed by $h$, and so $\g q$ is $c/E$-invariant, hence definable over~$c/E$.

As $\g q$ is definable over $c/E$ and $b\in\dcl(a,c)\subseteq\dcl(a,\c)$, by Lemma~\ref{lemgerm}
we have $b\in\dcl(a, c/E)$, i.e., the germ of $f_c$ is strong over~$A=\emptyset$.
\end{proof}

\subsection*{Forking, dividing, and preservation of stability}

\begin{lemma}\label{genstable4} Let  $\g{p}(x)\in S(\c)$ and assume that for all $n<\omega$, $\g{p}^{(n)}$  is generically stable over $A$. For any $\varphi(x,y)\in L$, for any tuple $m$, the following are equivalent:
\begin{enumerate}
\item  $\varphi(x,m)\in \g{p}(x)$
\item $(\g{p}(x)\restriction A) \cup\{\varphi(x,m)\}$ does not fork over $A$.
\item $(\g{p}(x)\restriction A) \cup\{\varphi(x,m)\}$ does not divide over $A$.
\end{enumerate}
\end{lemma}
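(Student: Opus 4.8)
The plan is to prove the cycle by first disposing of the easy implications and then isolating the genuine forking-equals-dividing content. The implication $(1)\Rightarrow(2)$ is immediate: since $\g{p}$ is generically stable over $A$ it is $A$-invariant (Fact~\ref{genstable2}) and hence does not fork over $A$, so the subtype $(\g{p}\restriction A)\cup\{\varphi(x,m)\}\subseteq\g{p}$ also does not fork over $A$. The implication $(2)\Rightarrow(3)$ is trivial, as dividing implies forking. The content is all in $(3)\Rightarrow(1)$.

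I would first establish $(2)\Rightarrow(1)$, upgrading the easy implication to an equivalence $(1)\Leftrightarrow(2)$; this is where stationarity does the work. If $(\g{p}\restriction A)\cup\{\varphi(x,m)\}$ does not fork over $A$, extend it to a complete global type $q\in S(\c)$ that does not fork over $A$. Then $q$ is a global nonforking extension of the complete type $\g{p}\restriction A$. By Fact~\ref{genstable2}(\ref{genstable2.3}) the type $\g{p}\restriction A$ is stationary and $\g{p}$ is its unique global nonforking extension; hence $q=\g{p}$, so $\varphi(x,m)\in q=\g{p}$, giving $(1)$.

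It remains to prove $(3)\Rightarrow(2)$, equivalently (using $(1)\Leftrightarrow(2)$) the contrapositive: $\neg\varphi(x,m)\in\g{p}$ implies that $(\g{p}\restriction A)\cup\{\varphi(x,m)\}$ divides over $A$. Assuming $\neg\varphi(x,m)\in\g{p}$, I would produce an $A$-indiscernible sequence $(m_j:j<\omega)$ with $m_j\equiv_A m$ along which $\{\varphi(x,m_j):j<\omega\}$ is inconsistent with $\g{p}\restriction A$. The starting observation is that any $a\models\g{p}\restriction A$ with $a\ind_A m_j$ satisfies $\neg\varphi(a,m_j)$: by stationarity such an $a$ realizes the unique nonforking extension $\g{p}\restriction Am_j$, and $\neg\varphi(x,m_j)\in\g{p}$ since $m_j\equiv_A m$ and $\g{p}$ is $A$-invariant. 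Contrapositively, if some $a\models\g{p}\restriction A$ satisfies all the $\varphi(x,m_j)$, then $a\nind_A m_j$ for every $j$, and by the symmetry of generic stability (Fact~\ref{genstable2}(\ref{genstable2.6})) also $m_j\nind_A a$ for every $j$. Thus it suffices to choose the $(m_j)$ so spread out over $A$ that no single tuple $a$ can fork over $A$ with all of them at once.

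To arrange this I would exploit that all powers $\g{p}^{(n)}$ are generically stable over $A$. Fixing a Morley sequence $(a_i:i<\omega)$ of $\g{p}$ over $A$, every $a_i$ realizes $\g{p}$ and hence $\neg\varphi(a_i,m)$, and by symmetry $m\ind_A(a_i)_{i<\omega}$; this lets me build $(m_j)$ as the image of $m$ under a generic array against $(a_i)$ so that the finite bound $n_\varphi$ of Remark~\ref{genstable1} applies in the $m$-direction, forcing $n_\varphi$-inconsistency of $\{\varphi(x,m_j)\}$ with $\g{p}\restriction A$. The main obstacle is exactly this construction: the bound $n_\varphi$ is stated for Morley sequences of $\g{p}$ itself, whereas $\tp(m/A)$ carries no stability assumption, so the control must be transferred to the $m$-side through the symmetry of Fact~\ref{genstable2}(\ref{genstable2.6}) and the generic stability of the products $\g{p}^{(n)}$ — precisely the hypothesis of the lemma. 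Once inconsistency along $(m_j)$ is secured, dividing follows and the proof is complete.
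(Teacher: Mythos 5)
Your handling of the easy implications is fine, and your direct proof of $(2)\Rightarrow(1)$ --- extend the non-forking partial type to a global type not forking over $A$ and quote stationarity, point \ref{genstable2.3} of Fact~\ref{genstable2} --- is correct (the paper gets this implication only by going around the cycle). But the implication carrying all the content, from (3) back to (1), is never actually proved: you concede that ``the main obstacle is exactly this construction,'' and the tools you propose for it do not apply. The symmetry statement, point \ref{genstable2.6} of Fact~\ref{genstable2}, transfers $a\ind_A b$ to $b\ind_A a$ only when $a$ realizes the generically stable type \emph{and} $\tp(b/A)$ does not fork over $A$. In your intended applications the role of $b$ is played by $m$ or $m_j$, and nothing in the hypotheses prevents $\tp(m/A)$ from forking over $A$ ($m$ is an arbitrary tuple). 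Indeed, if $\tp(m/A)$ forks over $A$, then $m_j\nind_A a$ holds for \emph{every} $a$ for trivial reasons (any extension of a forking type forks), so your conclusion ``$m_j\nind_A a$ for all $j$'' carries no information, and your goal of choosing $(m_j)$ ``so spread out over $A$ that no single tuple $a$ can fork over $A$ with all of them at once'' is unachievable: every $A$-conjugate of $m$ forks with everything. For the same reason your intermediate claim that ``by symmetry $m\ind_A(a_i)_{i<\omega}$'' is false in general, and the bound $n_\varphi$ of Remark~\ref{genstable1} cannot be made to ``apply in the $m$-direction'': it concerns Morley sequences of $\g p$, and there need not exist any non-forking sequence at all inside $\tp(m/A)$.

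The paper's proof shows how to avoid needing anything about $\tp(m/A)$: every use of generic stability and of symmetry is kept on the $\g p$-side. Since $\varphi(x,m)\notin\g p$, Remark~\ref{genstable1} bounds the number of solutions of $\varphi(x,m)$ in any Morley sequence of $\g p$ over $A$; let $N$ be the largest such number. Group a witnessing Morley sequence into blocks $b_i$ of length $N$, so that $(b_i:i<\omega)$ is a Morley sequence of $\g p^{(N)}$ over $A$, and stretch the sequence of pairs $(b_im_i:i<\omega)$ (with $m_0=m$) into a long $A$-indiscernible sequence $(b_im_i:i<\kappa)$. The non-dividing assumption then produces a \emph{single} realization $a$ of $(\g p\restriction A)\cup\{\varphi(x,m_i):i<\kappa\}$. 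Generic stability of $\g p^{(N)}$, applied with $a$ as the right-hand parameter, yields some $i$ with $b_i\ind_A a$; symmetry is now legitimately invoked with $b_i$ realizing the generically stable type $\g p^{(N)}\restriction A$ and with second argument $a$, whose type over $A$ is $\g p\restriction A$ and certainly does not fork over $A$. This gives $a\ind_A b_i$, so $(b_i,a)$ extends to a Morley sequence of $\g p$ over $A$ containing $N+1$ solutions of $\varphi(x,m_i)$, contradicting the maximality of $N$ (as $\tp(m_i/A)=\tp(m/A)$). This ``maximal $N$ plus blocking'' device --- which converts non-dividing into a parameter $a$ to be tested against a Morley sequence of $\g p^{(N)}$, instead of testing $m$ against anything --- is precisely the idea missing from your outline.
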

\begin{proof}  It is clear that \emph{1} $\Rightarrow$ \emph{2} $\Rightarrow$
\emph{3}. We prove \emph{3} $\Rightarrow$ \emph{1}. Let $\varphi(x,m)\notin \g{p}$.
We will assume that $(\g{p}(x)\restriction A)\cup \{\varphi(x,m)\}$ does not divide over~$A$ and aim for a contradiction. By generic stability of $\g{p}$, there is a greatest natural number $N$ such that for some Morley sequence $(a_i:i<\omega)$ of $\g{p}$ over $A$,
$\models\varphi(a_i,m)$ for $N$ many $a_i$.
Fix such a Morley sequence $(a_i:i<\omega)$, and without loss of generality we have $\models\varphi(a_i,m)$ for all $i<N$.
Let $b_0 = (a_0,\ldots,a_{N-1})$, $b_1 = (a_N,\ldots,a_{2N-1})$, etc., and let $m_0 = m$.
So $(b_i:i<\omega)$ is a Morley sequence in $\g p ^{(N)}$ over~$A$.
We can find $m_1, m_2,\ldots$ such that $(b_im_i:i<\omega)$ is indiscernible over~$A$.
We can extend this sequence to $(b_im_i:i<\kappa)$ for any $\kappa$.
Now our assumption that $(\g p(x)\restriction A)\cup \{\varphi(x,m)\}$ does not divide over~$A$
implies that there is $a$ realizing $(\g{p}(x)\restriction A)\cup \{\varphi(x,m_{i}):i<\kappa\}$.
\newline
{\em Claim.}  For some $i<\kappa$, $b_i$ is independent from $a$ over~$A$
(i.e. $b_i$ realizes $\g p^{(N)}\restriction Aa$).
\newline
{\em Proof of claim.}  By generic stability of $\g p^{(N)}$, we have that for every formula $\psi(y,x)\in L(A)$ such that
$\psi(y,a)\notin\g p^{(N)}$ there are only finitely many $b_i$ such that $\models\psi(b_i,a)$.
Hence for $\kappa$ large enough, there is $b_i$ such that $\models \neg\psi(b_i,a)$ for each formula $\psi(y,a)\notin \g p^{(N)}$.
Namely $b_i$ realizes $\g p^{(N)}\restriction Aa$ as required.

By the claim and point 5 of Fact~\ref{genstable2}, $a$ is independent of $b_i$
over~$A$, so $(b_i,a)$ extends to an infinite Morley sequence of $p$ over $A$ for which there are at least $N+1$ elements satisfying $\varphi(x,m_i)$. This is a contradiction with our choice of $N$  (as $\tp(m_i/A) = \tp(m/A)$).
\end{proof}

\begin{prop}\label{genstable5} Assume $\g p^{(n)}(x)$ is  generically stable over $A$ for every $n<\omega$. Let $M$ be a model containing $A$.
Let $a_0,a_1,\ldots$ be realizations of $\g{p}\restriction A$ and let $\Sigma(x_0 x_1\ldots) = \tp(a_0, a_1,\ldots/A)$. Then there is a realization
$(a_0^\prime,a_1^\prime,\ldots)$ of $\Sigma$ such that each $a_i^\prime$ realizes $\g p\restriction M$.
\end{prop}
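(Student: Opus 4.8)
The plan is to recast the conclusion as a single consistency statement and then reduce it to a finitary problem by compactness. Realizing $\Sigma$ with each coordinate satisfying $\g{p}\restriction M$ is precisely the consistency of the partial type
$\Gamma(\bar x)=\Sigma(\bar x)\cup\bigcup_i(\g{p}\restriction M)(x_i)$
over $M$. A finite subset of $\Gamma$ involves only finitely many coordinates, a single formula $\sigma(\bar x)\in\Sigma$ over $A$, and finitely many instances $\psi_j(x_j,m)\in\g{p}\restriction M$ (which we may take to share one parameter tuple $m$ from $M$). So it suffices to prove: for every finite tuple $(a_0,\dots,a_{k-1})$ with each $a_j\models\g{p}\restriction A$ and every such $m$, there is $(a_0',\dots,a_{k-1}')\equiv_A(a_0,\dots,a_{k-1})$ with $\models\psi_j(a_j',m)$ for all $j$.

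To prove this finite version I would \emph{not} try to realize the $a_j'$ directly, but instead move the parameter $m$, exploiting that each $a_j$ is already generic. Applying an $A$-automorphism, the finite statement is equivalent to finding $m'\equiv_A m$ with $\models\psi_j(a_j,m')$ for all $j$. Now I would use the stationarity of $\g{p}\restriction A$ together with symmetry: if $m'\ind_A a_j$, then $a_j\ind_A m'$ by point~\ref{genstable2.6} of Fact~\ref{genstable2} (since $a_j\models\g{p}\restriction A$ and $\tp(m'/A)$ is a type over $A$), hence $\tp(a_j/Am')$ is the nonforking extension $\g{p}\restriction Am'$ by point~\ref{genstable2.3} of Fact~\ref{genstable2}; as $\psi_j(x,m)\in\g{p}$ and $\g{p}$ is $A$-invariant we have $\psi_j(x,m')\in\g{p}$, so $\models\psi_j(a_j,m')$ is automatic. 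Thus the whole finite statement is reduced to a single independence assertion: \emph{there is a realization $m'\equiv_A m$ that is independent over $A$ from the generic coordinates $a_0,\dots,a_{k-1}$} (it is enough to have $m'\ind_A a_j$ for each $j$ simultaneously, equivalently $m'\ind_A\bar a$).

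The main obstacle is exactly this independence step: the tuple $\bar a$ is only \emph{coordinatewise} generic — its joint type over $A$ is the arbitrary $\Sigma$ and may fork badly — so one cannot simply treat $\bar a$ as a realization of $\g{p}^{(k)}\restriction A$ and invoke symmetry for the whole tuple. This is where the hypothesis that \emph{all} powers $\g{p}^{(n)}$ are generically stable becomes indispensable, entering through Lemma~\ref{genstable4}: the equivalence of forking and dividing over $A$ lets one spread $m$ out into an $A$-indiscernible sequence based inside the model $M$ (using that $\tp(m/A)$ is finitely satisfiable in $M$) and extract from it a copy $m'$ of $m$ independent over $A$ from $\bar a$, in the same manner as the Claim in the proof of Lemma~\ref{genstable4}. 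I expect the bookkeeping that arranges independence from all $k$ coordinates at once, and the precise appeal to Lemma~\ref{genstable4} that rules out forking, to be the technical heart of the argument; the Example, where $\g{p}^{(2)}$ already fails to be generically stable, indicates that generic stability of the higher powers cannot be dropped.
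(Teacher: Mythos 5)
Your compactness reduction and the automorphism trick of moving $m$ rather than $\bar a$ are both correct, and so is the chain ``$m'\ind_A \bar a$ $\Rightarrow$ $a_j\ind_A m'$ $\Rightarrow$ $\tp(a_j/Am')=\g p\restriction Am'$ $\Rightarrow$ $\models\psi_j(a_j,m')$'' (granting the independence, the hypothesis of point~\ref{genstable2.6} of Fact~\ref{genstable2} does hold, by monotonicity of forking). The genuine gap is the step you defer as the ``technical heart'': producing $m'\equiv_A m$ with $m'\ind_A\bar a$. This is not bookkeeping but an impossibility in general. By the extension property of forking, such an $m'$ exists if and only if $\tp(m/A)$ does not fork over $A$ (and even the weaker demand $m'\ind_A a_0$ already forces this); in an arbitrary theory a type may fork over its own domain, and your parenthetical justification that $\tp(m'/A)$ does not fork over $A$ ``since it is a type over $A$'' is exactly the false step. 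Nothing in the hypotheses of Proposition~\ref{genstable5} constrains $\tp(m/A)$: they speak only about $\g p$. Concretely, let $T$ be the theory of a two-sorted structure with no interaction between the sorts, one sort an infinite pure set and the other a dense circular order $C(x,y,z)$; let $\g p$ be the generic type of the pure-set sort and $A=\emptyset$. Then every $\g p^{(n)}$ is generically stable over $\emptyset$, but for $m$ in the circular-order sort $\tp(m/\emptyset)$ forks over $\emptyset$ (the formula $x=x$ implies a disjunction of three arc formulas $C(c_1,x,c_2)$, each of which divides over $\emptyset$), so no copy of $m$ is independent over $A$ from anything. The Proposition still holds in this theory; your route simply cannot reach it.

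Your proposed mechanism for finding $m'$ --- imitating the Claim inside the proof of Lemma~\ref{genstable4} --- does not transfer, and this is symptomatic of the same issue. That Claim extracts an element independent from a fixed $a$ out of a long \emph{Morley sequence of the generically stable type} $\g p^{(N)}$, and its proof rests on Remark~\ref{genstable1}: a formula outside a generically stable type is satisfied by only finitely many terms of any Morley sequence of it. Copies of $m$ realize an arbitrary type, for which no such finiteness holds, so spreading $m$ along an indiscernible sequence gives no way to select a term independent from $\bar a$. The paper avoids manufacturing independence altogether and argues by contradiction: if the conclusion fails, compactness and completeness of $\g p\restriction M$ yield a single formula $\varphi(x,m)\notin\g p$ with $\Sigma\models\bigvee_{i<n}\varphi(x_i,m)$; Lemma~\ref{genstable4} (in the contrapositive) then says $(\g p\restriction A)\cup\{\varphi(x,m)\}$ \emph{divides} over $A$, witnessed by an $A$-indiscernible sequence $(m_j:j<\omega)$ of $A$-conjugates of $m$ --- such ``bad'' copies exist no matter how badly $\tp(m/A)$ behaves. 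Since $\Sigma$ is over $A$, $\Sigma\models\bigvee_{i<n}\varphi(x_i,m_j)$ for every $j$, and the pigeonhole principle produces some $a_i\models\g p\restriction A$ satisfying infinitely many of the $\varphi(x,m_j)$, contradicting the inconsistency coming from dividing. You should restructure your argument this way: apply the forking-equals-dividing lemma to the formulas \emph{not} in $\g p$ and exploit their dividing sequences, rather than trying to find a well-behaved copy of $m$.
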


\begin{proof}
We may assume $M$ is $\omega$-saturated. It is enough to prove that
\[ \Sigma(x_0 x_1,\ldots)\cup\big\{\neg\varphi(x_i,m) : i<\omega,\,\varphi(x,z)\in L,\,
  m\in M,\,\varphi(x,m)\notin\g p(x)\restriction M\big\}\]
is consistent. Towards a contradiction, suppose not. Then it is not hard to see that
\[\Sigma(x_0 x_1,\ldots) \models \bigvee_{i<n}\varphi(x_i,m)\]
for some number $n$ and formula $\varphi(x,m)\in L(M)\setminus \g p(x)$.
By Lemma~\ref{genstable4}, $(\g p(x)\restriction A) \cup \{\varphi(x,m)\}$ divides over~$A$.
This is witnessed by a sequence $(m_j:j<\omega)$ of $A$-indiscernibles such that
$(\g p(x)\restriction A) \cup \{\varphi(x,m_j):j<\omega\}$ is inconsistent. For each $j$ we have
\[\Sigma(x_0 x_1,\ldots) \models \bigvee_{i<n}\varphi(x_i,m_j),\]
so for each $j$ there is an $i<n$ such that $\models\varphi(a_i,m_j)$.
By the pigeonhole principle there is an i such that for infinitely many $j$ we have $\models\varphi(a_i,m_j)$,
contradicting inconsistency of $(\g p(x)\restriction A) \cup \{\varphi(x,m_j):j<\omega\}$.
\end{proof}

\begin{cor}\label{preservation}
Let $p(x)\in S(B)$ be a stable type. If  $p$ does not fork over $A\subseteq B$, then $p\restriction A$ is stable.
\end{cor}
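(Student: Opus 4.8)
The plan is to use Proposition~\ref{genstable5} as the engine of the proof: it converts realizations of a restriction of the nonforking extension into realizations of $p$ itself, while preserving types over the base. Since $p$ is stable, by Remark~\ref{stable1}(3) it has no order property, and I would pull back a hypothetical order property for a suitable restriction of the nonforking extension to an order property for $p$. First I would set up the ambient data. Let $\g p\in S(\c)$ be a global nonforking extension of $p$, so $\g p\restriction B=p$, and fix a model $M\supseteq B\cup\acli(A)$. Then $\g p\restriction M$ extends $p$, hence is stable by Fact~\ref{stable3}, and is still nonforking over $A$; by Fact~\ref{stable4} its unique global nonforking extension is $\g p$. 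Lemma~\ref{generic} then shows $\g p$ is generically stable over $\acli(A)$, and Lemma~\ref{genstable3}(1), applied with $\acli(A)$ as base and $M$ as the model, upgrades this to: $\g p^{(n)}$ is generically stable over $\acli(A)$ for every $n<\omega$. Thus the hypotheses of Proposition~\ref{genstable5} hold with base $\acli(A)$.

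Next I would prove that $q:=\g p\restriction\acli(A)$ is stable by ruling out the order property. Suppose not; by Remark~\ref{stable1}(3) there are a formula $\varphi$, realizations $a_i\models q$ $(i<\omega)$, and parameters $(b_j:j<\omega)$ with $\models\varphi(a_i,b_j)\Leftrightarrow i<j$. Let $\Sigma=\tp((a_i)_{i<\omega}/\acli(A))$. Applying Proposition~\ref{genstable5} with the model $M$ produces $(a_i':i<\omega)$ realizing $\Sigma$ with each $a_i'\models\g p\restriction M$, hence $a_i'\models p$ because $M\supseteq B$. Since $(a_i')$ and $(a_i)$ have the same type over $\acli(A)$, there is $\sigma\in\Aut(\c/\acli(A))$ with $\sigma(a_i)=a_i'$; setting $b_j'=\sigma(b_j)$ gives $\models\varphi(a_i',b_j')\Leftrightarrow i<j$. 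But now $(a_i')$ realizes the stable type $p$, so this is an order property for $p$, contradicting Remark~\ref{stable1}(3). Hence $q$ is stable.

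Finally I would descend from $\acli(A)$ to $A$. Any two realizations of $p\restriction A$ are conjugate by an automorphism in $\Aut(\c/A)$, which fixes $\acli(A)$ setwise; hence every complete extension of $p\restriction A$ over $\acli(A)$ is $A$-conjugate to $q$, and so is stable. A counting-of-types argument exactly as in the proof of Lemma~\ref{acl}, carried out over $\acli(A)$ in place of $\acl(A)$, then yields stability of $p\restriction A$. The main obstacle is really the first two steps: arranging that all powers $\g p^{(n)}$ are generically stable over a single algebraically closed base so that Proposition~\ref{genstable5} even applies, and then seeing that its conclusion is precisely what manufactures, out of an order property for the restricted type, a genuine order property for the honestly stable type $p$. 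The passage from $\acli(A)$ back to $A$ is routine bookkeeping, the only subtlety being that one must work over $\acli(A)$ rather than $\acl(A)$, since Proposition~\ref{genstable5} requires realizations of $\g p\restriction\acli(A)$ and not merely of $\g p\restriction\acl(A)$.
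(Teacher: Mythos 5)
Your proof is correct, and it is essentially the paper's own argument: the same chain of reductions (Lemma~\ref{generic} and Lemma~\ref{genstable3} to make every power $\g p^{(n)}$ generically stable, Proposition~\ref{genstable5} to replace order-property witnesses for the restricted type by realizations of $p$ with the same type over the base, Remark~\ref{stable1}(3) to contradict stability of $p$). The one place you genuinely diverge is the bookkeeping of the base, and there your version is the more defensible one. The paper asserts outright that each $\g p^{(n)}$ is generically stable over $A$ and applies Proposition~\ref{genstable5} with base $A$; taken literally this is too strong, since generic stability over $A$ requires $A$-invariance of $\g p$, which can fail under the corollary's hypotheses (in the theory of one equivalence relation with two infinite classes, take $A=\emptyset$, $B$ a model, and $p$ the generic type of one class: $p$ is stable and does not fork over $\emptyset$, yet the class-swapping automorphism moves $\g p$). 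What is true is generic stability of each $\g p^{(n)}$ over $\acli(A)$, which is exactly what you establish; you then run the pullback over $\acli(A)$ to prove $\g p\restriction\acli(A)$ stable and descend to $A$ by conjugacy plus a Lemma~\ref{acl}-style argument --- which is surely the role the paper's citation of Lemma~\ref{acl} is meant to play, though it never says so explicitly. Two minor points to polish in your write-up: the phrase ``a model $M\supseteq B\cup\acli(A)$'' should be read in $T^{\mathrm{eq}}$ (any model $M\supseteq B$ does the job, since $\acli(A)\subseteq\acli(M)=\dcli(M)$), and the $\acli$-version of Lemma~\ref{acl} you invoke at the end, while standard, deserves the same brief justification the paper gives for Lemma~\ref{acl} itself (all extensions of $p\restriction A$ over $\acli(A)$ are $A$-conjugate, hence all stable, then count types --- or extract an $A$-indiscernible witnessing sequence, whose terms realize a common type over $\acli(A)$).
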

\begin{proof}
Without loss of generality, let $B$ be a model.
Using Lemma~\ref{generic}, we can obtain a global type $\g p\supset p$ generically stable over~$\acle A$.
Note that by Lemmas~\ref{acl} and~\ref{genstable3}, each $\g{p}^{(n)}$ is generically stable over~$A$ and over~$B$.
Assume $p\restriction A$ is unstable. By Remark~\ref{stable1}, there exist a sequence $(a_i:i<\omega)$ of realizations of $p\restriction A$, a sequence $(b_j:j<\omega)$, and a formula $\varphi(x,y)$ such that $\models\varphi(a_i,b_j)\Leftrightarrow i<j$.
By Proposition~\ref{genstable5}, there is a sequence $(a'_i:i<\omega)$ with the same type as $(a_i:i<\omega)$ over~$A$ and such that each $a'_i$ realizes~$p$. The new sequence, together with a corresponding sequence $(b'_j:j<\omega)$ and $\varphi(x,y)$, witnesses that $p$ is unstable by Remark~\ref{stable1}.
\end{proof}

This result generalizes a similar claim in~\cite{HassonOnshuus10}, where it is assumed that $T$ is NIP, and provides a correct proof.


\noindent{\sc
University of Vienna}\\
{\tt johannes.aquila@googlemail.com}\\

\noindent{\sc
University of Barcelona\\
{\tt e.casanovas@ub.edu}\\

\noindent{\sc University of Leeds}\\ {\tt A.Pillay@leeds.ac.uk}


\end{document}